\newcommand{\xqedhere}[1]{%
    \rlap{%
         \hbox to#1{%
           \hfil
           \llap{%
               \ensuremath{\square}
           }%
       }%
   }%
}
\def\pasdegrille{\let\grille = \pasgrille}
\def\ecriture#1#2{\setbox1=\hbox{#1} 
\dimen1= \wd1 \dimen2=\ht1 \dimen3=\dp1 \grille #2 \box1 }
\def\aat#1#2#3{
\divide \dimen1 by 48 \dimen3=\dimen1 \multiply \dimen1 by #1
\advance \dimen1 by -\dimen3 \divide \dimen1 by 101 \multiply
\dimen1 by 100 \divide \dimen2 by \count11 \multiply \dimen2 by #2
\setbox0=\hbox{#3}\ht0=0pt\dp0=0pt
  \rlap{\kern\dimen1 \vbox to0pt{\kern-\dimen2\box0\vss}}\dimen1= \wd1
\dimen2=\ht1}
\def\pasgrille{
\count12= \dimen1 \divide \count12 by 50 \divide \dimen2 by
\count12 \count11 =\dimen2 \ \divide \dimen1 by 48
\setlength{\unitlength}{\dimen1} \smash{\rlap{\ }} \dimen1= \wd1
\dimen2=\ht1 }
\def\grille{
\count12= \dimen1 \divide \count12 by 50 \divide \dimen2 by
\count12 \count11 =\dimen2 \ \divide \dimen1 by 48
\setlength{\unitlength}{\dimen1}
\smash{\rlap{\graphpaper[1](0,0)(50, \count11)}} \dimen1= \wd1
\dimen2=\ht1 }
\newcommand{\be}{\begin{equation}}
\newcommand{\ee}{\end{equation}}
\newcommand{\NN}{{\mathbb N}}
\newcommand{\RR}{{\mathbb R}}
\renewcommand{\Re}{\mathop{\rm Re}\nolimits}
\renewcommand{\Im}{\mathop{\rm Im}\nolimits}
\theoremstyle{plain}
\newtheorem{thm}{Theorem}
\newtheorem{prop}{Proposition}[section]
\newtheorem{cor}[prop]{Corollary}
\newtheorem{lem}[prop]{Lemma}
\theoremstyle{definition}
\newtheorem{rem}[prop]{Remark}
\newtheorem{defn}[prop]{Definition} 
\numberwithin{equation}{section}
\def\squarebox#1{\hbox to #1{\hfill\vbox to #1{\vfill}}}
\title[Semi-classical measures]{Semi-classical measures for inhomogeneous Schr\"odinger equations on tori}
\author[N. Burq]{Nicolas Burq}
\address{Universit{\'e} Paris Sud, Math{\'e}matiques, B{\^a}t 425, 91405  Orsay Cedex, France,  UMR 8628 du CNRS and Ecole Normale Sup\'erieure, 45, rue d'Ulm, 75005 Paris,  Cedex 05,  France, UMR 8553 du CNRS}
\email{Nicolas.burq@math.u-psud.fr}
\def\11{{\rm 1~\hspace{-1.4ex}l} }
\def\R{\mathbb R}
\def\C{\mathbb C}
\def\Z{\mathbb Z}
\def\N{\mathbb N}
\def\T{\mathbb T}
\begin{document}

\begin{abstract}
The purpose of this note is to investigate the high frequency  behaviour of solutions to linear Schr\"odinger equations. More precisely,  Bourgain~\cite{Bo} and Anantharaman-Macia~\cite{AM} proved that any weak-$*$ limit of the square density of solutions to the time dependent homogeneous Schr\"odinger equation is absolutely continuous with respect to the Lebesgue measure on $\mathbb{R} \times \T^d$. Our contribution
% is to show that on the one hand, following a strategy iniciated in~\cite{BZ2}, this result can be precised in a quantitative way (at least on the two dimensional case), and on the other hand, 
 is that the same result automatically holds for non homogeneous Schr\"odinger equations, which allows for abstract potential type perturbations of the Laplace operator.
%In the dynamical case $ V $ can be made dependendent on
%time.
\end{abstract}   

\maketitle   

\section{Introduction}  We are interested in this note in understanding the high frequency behaviour of solutions of  linear Schr\"odinger equations on tori, $\T^d= \R^d / \Z^d$. Consider a sequence of initial data $(u_{0,n})$, bounded in $L^2( \T^d)$ and denote by $(u_n)$ the sequence of solutions to Schr\"odinger equation and $(\nu_n)$ their concentration measures given by 
$$ u_n = e^{it\Delta} u_{0,n}, \qquad \nu_n = |u_n|^2(t,x) dtdx,$$
The sequence $\nu_n$ on $\RR_t \times \T^d$ is bounded (in mass) on any time interval $(0,T)$ by \smash{$T \sup_n \|u_{0,n}\|_{L^2( \T^d)}^2$}. The following result was proved by Bourgain~\cite[Remark p 108]{Bo} and later by Anantharaman-Macia~\cite[Theorem 1]{AM} by a completely different approach, following a more geometric path (see also~\cite{Ja, Mac, BZ2, BZ3, AJM} for related works).
\begin{thm}\label{theorem.1}
Any weak-$*$ limit of the sequence $(\nu_n)$ is absolutely continuous with respect to the Lebesgue measure $dtdx$ on $\RR_t \times \T^d$.
\end{thm}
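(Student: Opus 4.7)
The plan is to follow the semiclassical-measures strategy of Anantharaman--Macia~\cite{AM}, viewing the problem through the completely integrable geodesic flow on $\T^d$.

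\emph{Step 1 (reduction to a single concentration scale).} I decompose $u_{0,n}$ dyadically in frequency via Littlewood--Paley projectors $P_N$. The low-frequency piece $P_{\leq 1}u_{0,n}$ evolves into a uniformly smooth trigonometric polynomial of fixed degree and bounded $L^2$ norm; its square is bounded in $L^\infty$, so its contribution to $\nu_n$ limits to an absolutely continuous measure. For the remaining high-frequency mass, a standard extraction produces (up to subsequence) a scale $h_n \to 0^+$ such that the data is $h_n$-oscillatory: a fixed positive fraction of its $L^2$ mass is concentrated at frequencies of size $1/h_n$. This localization at a single semiclassical scale is what makes the subsequent Wigner analysis meaningful.

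\emph{Step 2 (Wigner measure and flow invariance).} To $(u_n, h_n)$ I associate the time-dependent semiclassical measure $\mu$ on $\RR_t \times T^*\T^d$ defined by
\[
\int a\, d\mu = \lim_{n\to\infty} \int_\RR \la \Op\bigl(a(t,\cdot,\cdot)\bigr) u_n(t,\cdot), u_n(t,\cdot)\ra_{L^2(\T^d)}\, dt,
\]
for $a \in \CIc(\RR\times T^*\T^d)$; its push-forward under $(t,x,\xi)\mapsto(t,x)$ is (up to the absolutely continuous low-frequency contribution of Step~1) the weak-$*$ limit of $\nu_n$. Rescaling $t=h_n s$ casts the Schr\"odinger equation as a semiclassical evolution with symbol $|\xi|^2$, and Egorov's theorem on $s$-intervals of length $1/h_n\to\infty$ yields invariance of $\mu$ under the whole geodesic flow $\phi_s(x,\xi) = (x+2s\xi,\xi)$, or equivalently $\int\{|\xi|^2,a\}\,d\mu = 0$ for every test function.

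\emph{Step 3 (resonant stratification).} I stratify $\RR^d_\xi$ according to the resonance module $\Lambda(\xi) = \{k\in\ZZ^d : k\cdot\xi = 0\}$. On the non-resonant stratum $\Lambda(\xi)=\{0\}$ the orbit $s\mapsto x+2s\xi$ equidistributes on $\T^d$, so $\phi_s$-invariance forces the $x$-disintegration of $\mu$ along that fiber to be Lebesgue measure, and the corresponding piece of $\pi_*\mu$ is absolutely continuous. The genuine difficulty lies on the resonant cones: there, orbits sit in proper subtori of $\T^d$, and flow invariance alone is compatible with singular measures supported on them.

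\emph{Step 4 (second microlocalization and main obstacle).} For each primitive rational submodule $\Lambda\subset\ZZ^d$, I perform a two-microlocal blow-up of phase space near the cone $\Lambda\otimes\RR$. At secondary order in $h_n$ the Schr\"odinger flow, restricted to this blow-up, is itself governed by a Schr\"odinger-type equation on the quotient torus $\T^d/\Lambda^\perp$, and the two-microlocal measure inherits invariance under this auxiliary dispersive evolution. An induction on $\rank\Lambda$, with the non-resonant stratum of Step~3 as base case, transfers absolute continuity down from the lower-dimensional tori and closes the argument. This is the hard step: bare geodesic-flow invariance does not rule out concentration on subtori, and it is precisely the second-microlocal Schr\"odinger dynamics on each resonant quotient that prevents such concentration. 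Constructing the appropriate two-microlocal calculus and identifying the effective evolution on every subtorus are what drive the proof.
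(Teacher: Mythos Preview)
The paper does not prove Theorem~\ref{theorem.1}. It is quoted as a known result of Bourgain~\cite{Bo} and Anantharaman--Macia~\cite{AM}, and the author states explicitly that it is ``used as a black box'' in order to prove the paper's actual contribution, Theorem~\ref{theorem.2} on the inhomogeneous equation. So there is no proof in the paper to compare your proposal against.

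That said, what you have written is a faithful high-level outline of the Anantharaman--Macia argument: reduction to a single semiclassical scale, extraction of a flow-invariant Wigner measure on $T^*\T^d$, the observation that flow invariance alone settles the non-resonant $\xi$'s, and the second-microlocal induction on the rank of the resonance lattice to handle the rational cones. Your Step~4 correctly identifies the crux---that an effective lower-dimensional Schr\"odinger dynamics on each resonant subtorus is what rules out singular concentration---though of course the actual construction of the two-microlocal calculus and the propagation estimates that drive the induction are substantial and only gestured at here. Note also that Bourgain's original proof~\cite{Bo} goes by a completely different, arithmetic route (lattice-point counting and exponential sums) rather than microlocal analysis; your sketch does not touch that approach at all.
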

\begin{rem} Actually, in~\cite{AM} a more precise description of the possible limits is given and the result is proved in the case of Schr\"odinger operators $\Delta + V(t,x)$, if $V\in L^\infty ( \RR_t \times \T^2)$ is also continuous except possibly on a set of (space-time) Lebesgue measure $0$.
 \end{rem}
 The purpose of this note is to show that the result in Theorem~\ref{theorem.1} extends
  to the case of solutions to the non-homogeneous Schr\"odinger equation, and consequently to the case of Schr\"odinger operators $\Delta+ V$ where $V\in L^1_{\text{loc}}(\R_t; \mathcal{L} (L^2( \T^d)))$ (we also give as  an illustration an application to a simple non linear equation). Let us emphasize that our approach uses no particular property of the Laplace operator on tori other than self-adjointness (to get $L^2$ bounds for the time evolution) and the fact that Theorem~\ref{theorem.1} holds, which is used as a black box, and establishes an abstract link between the study of  weak-$*$ limits of solutions of the homogeneous and inhomogeneous Schr\"odinger equations. 

{\bf Acknowledgements.} I would like to thank P. G\'erard for suggesting the application in Section~\ref{sec.3}. I also acknowledge partial support from the Agence Nationale de la Recherche, project NOSEVOL, 2011 BS01019 01.
\section{Inhomogeneous Schr\"odinger equations}

\begin{defn} Let $T>0$.
For any sequence $(u_n)$ bounded in $L^2((0,T)\times \T^d)$, we say that the sequence $(u_n)$ satisfies property ($AC_T$) if any weak-$*$ limit, $\nu$ of $(\nu_n)$ is absolutely continuous with respect to the Lebesgue measure on $(0,T)\times \T^d$.
\end{defn}
\begin{thm}\label{theorem.2}
Let $(u_{n,0})$ and $(f_{n})$ be two sequences bounded in $L^2( \T^d)$ and $L^1_{\text{loc}}(\RR_t; L^2( \T^d))$ respectively. Let $u_n$ be the solution of 
$$ (i \partial_t + \Delta ) u_n=f_n, \qquad u_n \mid_{t=0} = u_{n,0}, \qquad u_n = e^{it \Delta} u_{n,0}+ \frac 1 i \int_0^t e^{i(t-s) \Delta} f_n (s) ds.$$
Then for any $T>0$, the sequence $(u_n)$, which is clearly bounded in $L^2( (0,T) \times \T^2)$ by 
$$T^{1/2} \sup_n\bigl(\|u_{n,0}\|_{L^2(\T^d)} + \|f_n\|_{L^1((0,T); L^2( \T^d))} \bigr), $$
  satisfies property ($AC_T$). 
\end{thm}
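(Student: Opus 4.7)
The plan is to reduce Theorem~\ref{theorem.2} to Theorem~\ref{theorem.1} by approximating $u_n$ in $L^2((0,T) \times \T^d)$, uniformly in $n$, by a sequence whose square modulus is, slab by slab in time, the square of a homogeneous Schr\"odinger flow. The key conceptual point is that direct approximation of $f_n$ in $L^1((0,T); L^2(\T^d))$ by simple functions cannot be made uniform in $n$ --- bounded sequences in $L^1_t L^2_x$ are not equi-integrable in general --- and the object to approximate is not $f_n$ but the \emph{free-evolution preimage}
$$
\psi_n(t) \defeq e^{-it\Delta} u_n(t) = u_{n,0} - i \int_0^t e^{-is\Delta} f_n(s)\,ds \in C([0,T]; L^2(\T^d)),
$$
which, by the $L^2$-unitarity of the free propagator, enjoys the uniform equicontinuity
$$
\|\psi_n(t) - \psi_n(t')\|_{L^2(\T^d)} \le \int_{t'}^{t} \|f_n(\sigma)\|_{L^2(\T^d)}\,d\sigma, \qquad t' \le t.
$$

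Fix $N \ge 1$, set $t_k = kT/N$, $I_k = [t_k, t_{k+1})$, and define
$$
\tilde u_n^{(N)}(t,x) \defeq \bigl(e^{it\Delta} \psi_n(t_k)\bigr)(x), \quad t \in I_k,\ 0 \le k < N.
$$
On each slab $I_k \times \T^d$, $\tilde u_n^{(N)}$ is the restriction of a homogeneous Schr\"odinger flow starting from the datum $\psi_n(t_k) \in L^2(\T^d)$, uniformly bounded by $\|u_{n,0}\|_{L^2} + \|f_n\|_{L^1((0,T);L^2)}$. Setting $a_k \defeq \int_{I_k} \|f_n(s)\|_{L^2}\,ds$ and using the unitarity of $e^{it\Delta}$ together with the elementary inequality $\sum_k a_k^2 \le (\sum_k a_k)^2$ (valid for $a_k \ge 0$), one obtains the key uniform estimate
$$
\|u_n - \tilde u_n^{(N)}\|_{L^2((0,T)\times\T^d)}^2 = \sum_{k} \int_{I_k} \|\psi_n(t) - \psi_n(t_k)\|_{L^2}^2\,dt \le \frac{T}{N} \sum_k a_k^2 \le \frac{T}{N} \|f_n\|_{L^1 L^2}^2.
$$

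For each fixed $N$, the sequence $(\tilde u_n^{(N)})_n$ satisfies $(AC_T)$. The supports $I_k$ being pairwise disjoint, there are no cross terms and
$$
|\tilde u_n^{(N)}|^2 = \sum_{k=0}^{N-1} \mathbf{1}_{I_k}(t)\, |e^{it\Delta} \psi_n(t_k)|^2.
$$
For each $k$, Theorem~\ref{theorem.1} applied to the homogeneous flow $(e^{it\Delta} \psi_n(t_k))_n$ shows every weak-$\ast$ cluster point of $|e^{it\Delta}\psi_n(t_k)|^2\,dt\,dx$ is absolutely continuous. For $\varphi \in C_c$, $\varphi \ge 0$, one has $\int \varphi\, \mathbf{1}_{I_k} |e^{it\Delta}\psi_n(t_k)|^2 \le \int \varphi\, |e^{it\Delta}\psi_n(t_k)|^2$, so any weak-$\ast$ cluster point of the truncated sequence is dominated by an absolutely continuous measure, hence absolutely continuous. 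Passing to a subsequence along which all $N$ truncated sequences converge weak-$\ast$ and summing yields $(AC_T)$ for $(\tilde u_n^{(N)})_n$.

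To conclude, let $\nu$ be a weak-$\ast$ cluster of $|u_n|^2\,dt\,dx$ along a subsequence. A diagonal extraction further ensures that $|\tilde u_n^{(N)}|^2\,dt\,dx \to \tilde\nu^{(N)}$ weak-$\ast$ for every $N$, with $\tilde\nu^{(N)}$ absolutely continuous. The identity $|a|^2 - |b|^2 = \overline{a}(a-b) + b\overline{(a-b)}$ combined with the key estimate and Cauchy--Schwarz gives $\| |u_n|^2 - |\tilde u_n^{(N)}|^2 \|_{L^1} \le C/\sqrt{N}$ uniformly in $n$, so $|\int \varphi\,d\nu - \int \varphi\,d\tilde\nu^{(N)}| \le C \|\varphi\|_\infty/\sqrt N$ for every $\varphi \in C_c((0,T)\times \T^d)$. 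The Jordan decomposition then bounds the singular part of $\nu$ in total variation by $C/\sqrt N$ for every $N$, hence $\nu_s = 0$. The main obstacle throughout the argument is the uniform-in-$n$ $L^2$ approximation; its resolution --- working with $\psi_n$ rather than $f_n$, and exploiting $\sum a_k^2 \le (\sum a_k)^2$ --- is the crucial point of the proof.
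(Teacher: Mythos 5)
Your argument is correct, and it reaches the conclusion by a genuinely different (and somewhat more elementary) route than the paper. The paper keeps the Duhamel term and performs a Christ--Kiselev type decomposition of the triangle $\{s\le t\}$ into squares $J_{j,q,n}\times I_{j,q,n}$ whose endpoints are chosen \emph{adaptively in $n$} so that $\|f_n\|_{L^1(I_{j,q,n};L^2)}=2^{-q}c_n$; the approximants are the partial sums over $q\le k$, which are finite sums of time-truncated free evolutions, the tail being $O(2^{-k})$ in $L^\infty((0,T);L^2(\T^d))$, and the conclusion is drawn from the closedness Lemma~\ref{lemme.3.2}. You instead freeze the interaction-picture profile $\psi_n(t)=e^{-it\Delta}u_n(t)$ on a \emph{uniform} partition of $(0,T)$ into $N$ slabs; the resulting error is not small in $L^\infty_tL^2_x$ uniformly in $n$ (the mass $a_k$ of $f_n$ may concentrate on a single slab), but it is small in $L^2((0,T)\times\T^d)$ because the offending slab has length $T/N$ --- this is exactly your estimate $\sum_k|I_k|a_k^2\le (T/N)\bigl(\sum_k a_k\bigr)^2$, and this convexity inequality plays the role that the adaptive choice of the points $t_{p,q,n}$ plays in the paper; it works precisely because property ($AC_T$) only requires approximation in $L^2_{t,x}$. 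Your slab-by-slab argument (domination of the truncated measures by the untruncated free-flow measures, to which Theorem~\ref{theorem.1} applies) is the paper's remark that ($AC_T$) survives time truncation, and your closing total-variation bound on the singular part of $\nu$ is a quantitative substitute for Lemma~\ref{lemme.3.2} (what you invoke is the Lebesgue rather than the Jordan decomposition, a harmless slip). What the paper's extra bookkeeping buys is robustness: the adaptive dyadic partition yields approximants whose error is small in the stronger norm $L^\infty_tL^2_x$, which is what the Christ--Kiselev mechanism is designed for and what one would need in settings where the error norm does not gain a factor from the shortness of the slabs; for the present statement your uniform partition suffices, and both proofs use Theorem~\ref{theorem.1} strictly as a black box.
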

\begin{cor}\label{corol}
Let $V\in L^1_{\text{loc}}(\RR_t; \mathcal{L} (L^2( \T^2)))$ (for example $V$ can be chosen to be a potential in $ L^1_{\text{loc}}(\RR_t; L^\infty( \T^2))$ acting by pointwize multiplication). For any sequence $(u_{n,0})_{n \in \NN}$ bounded in $L^2( \T^2)$, let $(u_n)$ be the sequence of the unique solutions in $C^0(\RR; L^2( \T^2))$ of 
$$ (i \partial_t + \Delta + V (t)) u_n=0, \qquad u_n \mid_{t=0} = u_{n,0}.$$
Then the sequence $(u_n)$ satisfies for any $T>0$ the property ($AC_T$). 
\end{cor}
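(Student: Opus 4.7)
The plan is to deduce Corollary \ref{corol} from Theorem \ref{theorem.2} by rewriting the equation in inhomogeneous form. Setting $f_n(t) \defeq -V(t)u_n(t)$, the equation $(i\partial_t + \Delta + V(t))u_n = 0$ becomes
$$(i\partial_t + \Delta)u_n = f_n, \qquad u_n\mid_{t=0} = u_{n,0},$$
which matches the setting of Theorem \ref{theorem.2}. So the only thing to verify is that $(f_n)$ is bounded in $L^1_{\text{loc}}(\RR_t; L^2(\T^2))$; this will follow at once from a uniform bound on $(u_n)$ in $L^\infty_{\text{loc}}(\RR_t; L^2(\T^2))$ together with the pointwise estimate $\|f_n(t)\|_{L^2} \leq \|V(t)\|_{\mathcal{L}(L^2)}\|u_n(t)\|_{L^2}$.

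To obtain the uniform $L^\infty_{\text{loc}}(L^2)$ bound, I would use Duhamel's formula for the solution $u_n \in C^0(\RR;L^2)$ supplied by the statement,
$$u_n(t) = e^{it\Delta} u_{n,0} + \frac{1}{i}\int_0^t e^{i(t-s)\Delta}\bigl(-V(s)u_n(s)\bigr)\,ds,$$
and the isometry property $\|e^{it\Delta}\|_{L^2\to L^2}=1$ to get
$$\|u_n(t)\|_{L^2} \leq \|u_{n,0}\|_{L^2} + \int_0^t \|V(s)\|_{\mathcal{L}(L^2)}\,\|u_n(s)\|_{L^2}\,ds.$$
Gronwall's inequality then yields
$$\|u_n(t)\|_{L^2(\T^2)} \leq \|u_{n,0}\|_{L^2(\T^2)}\exp\Bigl(\int_0^{|t|} \|V(s)\|_{\mathcal{L}(L^2)}\,ds\Bigr),$$
which is uniform in $n$ on any compact time interval since $V \in L^1_{\text{loc}}(\RR_t;\mathcal{L}(L^2))$ and $(u_{n,0})$ is bounded in $L^2(\T^2)$.

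Combining these bounds, for every $T>0$,
$$\|f_n\|_{L^1((0,T);L^2(\T^2))} \leq \int_0^T\|V(s)\|_{\mathcal{L}(L^2)}\,\|u_n(s)\|_{L^2}\,ds \leq C_T,$$
with $C_T$ independent of $n$. Theorem \ref{theorem.2} applied to $(u_{n,0})$ and $(f_n)$ then gives property ($AC_T$) for $(u_n)$, which is the claim. There is no real obstacle here beyond the Gronwall step; the conceptual content is entirely contained in Theorem \ref{theorem.2}, and the corollary is essentially a self-consistency closing-the-loop: the potential term, once absorbed into the right-hand side, is controlled by the solution itself, and the $L^2$ operator-valued assumption on $V$ is exactly what makes this bootstrap succeed.
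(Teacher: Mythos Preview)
Your proof is correct and follows essentially the same route as the paper: set $f_n=-V(t)u_n$, obtain a uniform-in-$n$ $L^2$ bound on $u_n$ via Gronwall, deduce that $(f_n)$ is bounded in $L^1_{\text{loc}}(\RR_t;L^2)$, and apply Theorem~\ref{theorem.2}. The only cosmetic difference is that the paper derives the a~priori bound by differentiating $\|u_n(t)\|_{L^2}^2$ directly (an energy identity), whereas you use Duhamel and the unitarity of $e^{it\Delta}$; both yield the same Gronwall inequality.
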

Indeed, since 
$$
 \frac{d} {dt} \|u_n\|_{L^2( \T^d)} ^2 =  2\Re \bigl( \partial_t u , u \bigr) _{L^2 ( \T^d)}
 = 2 \Re \bigl( i \Delta u  + iVu  , u\bigr) _{L^2 ( \T^d)} = -2\Im ( V u, u)_{L^2( \T^d)}
 $$
 we obtain by Gronwall inequality 
 $$ \| u_n(t)\|_{L^2(\T^d)}^2 \leq \|u_{n,0} \|_{L^2( \T^d)}^2 e^{\int_0^t \| V(s) \|_{\mathcal{L} ( L^2 ( \T^d)}ds},$$
 and consequently  the sequence  $(f_n)= (-V (t) u_n)$ is clearly bounded in $L^1_{\text{loc}}(\RR_t; L^2( \T^d))$ and we can apply Theorem~\ref{theorem.2}.
 \begin{rem} Any time independent $V\in \mathcal{L}( L^2( \T^d))$ satisfies the assumptions above, and consequently, if $(u_n)$ is a sequence of $L^2$ normalized eigenfunctions of $\Delta + V$, it follows from Corollary~\ref{corol} that any weak-$*$ limit of $|u_n|^2(x) dx$ is absolutely continuous with respect to the Lebesgue measure on $\T^d$. The proof we present below seems to be intrinsically a time dependent proof. However, it would be interesting to obtain a proof  of this result avoiding the detour via the study of the time dependent Schr\"odinger equation. 
 \end{rem}  
\begin{proof}[Proof of Theorem~\ref{theorem.2}.]
Notice first that if $(u_n)$ satisfies property ($AC_T$), then the sequence $(u_n+v_n)$ satisfies property ($AC_T$) iff the sequence $(v_n)$ satisfies property ($AC_T$), because, if $|u_n|^2 dtdx$ and $|v_n|^2 dtdx$ are converging weakly to $\nu$ and $\mu$ respectively, then according to Cauchy-Schwarz inequality any weak-$*$ limit of $|u_n+v_n|^2 dtdx$ is absolutely continuous with respect to $\nu+ \mu$. The following result shows that the set of sequences satisfying property ($AC_T$) is closed in some weak-strong topology. 
\begin{lem}\label{lemme.3.2}
Consider $(u_n)$ bounded in $L^2((0,T) \times \T^2)$. Assume  that there exists for any $k \in \NN$ a sequence $(u_n^{(k)})_{n\in \mathbb{N}}$ such that 
\begin{enumerate}
\item For any $k$, the sequence  $(u_n^{(k)})_{n\in \mathbb{N}}$ satisfies Property ($AC_T$)
\item The  sequences $(u_n^{(k)})_{n\in \NN}$ are approximating the sequence $(u_n)$ in the following sense. 
\begin{equation}\label{eq.approx}
\lim_{k\rightarrow + \infty} \limsup_{n\rightarrow + \infty} \|u_n - u_{n}^{(k)}\|_{L^2((0,T) \times \T^2)} =0.
\end{equation}
\end{enumerate}
Then the sequence $(u_n)_{n\in \mathbb{N}}$ satisfies property ($AC_T$). 
\end{lem}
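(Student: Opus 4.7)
The plan is to pass to a weak-$*$ limit along a diagonal subsequence that works simultaneously for the error terms and the approximants, and then exploit the Cauchy-Schwarz inequality already invoked in the remark preceding the lemma to compare the limiting measure $\nu$ of $|u_n|^2 dtdx$ with the two pieces coming from the decomposition $u_n = u_n^{(k)} + (u_n - u_n^{(k)})$.

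First, I would extract a subsequence (still denoted $n$) such that $|u_n|^2 dtdx \rightharpoonup \nu$ weakly-$*$ on $(0,T)\times \T^d$. By a standard diagonal argument, I would further extract so that for every $k \in \NN$ simultaneously, $|u_n^{(k)}|^2 dtdx \rightharpoonup \nu^{(k)}$ and $|u_n - u_n^{(k)}|^2 dtdx \rightharpoonup \mu^{(k)}$ as $n \to \infty$. By assumption (1), each $\nu^{(k)}$ is absolutely continuous with respect to $dtdx$, say $\nu^{(k)} = g_k \, dtdx$.

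The core inequality is the pointwise bound $|a+b|^2 \le 2|a|^2 + 2|b|^2$, which, applied to $u_n = u_n^{(k)} + (u_n - u_n^{(k)})$ and tested against any $\varphi \in C_c((0,T)\times \T^d)$ with $\varphi \ge 0$, passes to the limit and yields the inequality of measures
$$ \nu \le 2 \nu^{(k)} + 2 \mu^{(k)} \quad \text{for every } k \in \NN. $$
Moreover, the total mass of $\mu^{(k)}$ is controlled by the assumption \eqref{eq.approx}: approximating the characteristic function of $(0,T)\times \T^d$ from below by compactly supported nonnegative $\varphi$'s, one gets
$$ \mu^{(k)}\bigl((0,T)\times \T^d\bigr) \le \limsup_{n\to \infty} \|u_n - u_n^{(k)}\|_{L^2((0,T)\times \T^d)}^2 \xrightarrow[k\to \infty]{} 0. $$

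To conclude, let $A \subset (0,T) \times \T^d$ be a Borel set with $|A| = 0$. Since $\nu^{(k)} \ll dtdx$, we have $\nu^{(k)}(A) = 0$, hence from the measure inequality above $\nu(A) \le 2 \mu^{(k)}(A) \le 2\mu^{(k)}((0,T)\times \T^d)$, and letting $k \to \infty$ forces $\nu(A) = 0$. This shows $\nu \ll dtdx$, i.e.\ $(u_n)$ satisfies ($AC_T$). The only step that requires a bit of care is the passage from the pointwise Cauchy-Schwarz inequality to the inequality of weak-$*$ limits on arbitrary Borel sets, but this is a routine consequence of inner/outer regularity, so I do not anticipate any real obstacle.
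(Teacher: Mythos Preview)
Your proposal is correct and follows essentially the same route as the paper: both arguments rest on the elementary inequality $|a+b|^2 \le 2|a|^2 + 2|b|^2$, pass to weak-$*$ limits, and invoke regularity of the limiting measures to deduce $\nu(E)=0$ for null sets $E$. The only cosmetic difference is that the paper fixes $\epsilon>0$, chooses a single $k$ with $\limsup_n \|u_n-u_n^{(k)}\|_{L^2}<\epsilon$, and bounds the error contribution by $2\epsilon^2$ directly, whereas you carry the error as a measure $\mu^{(k)}$ (extracted for all $k$ at once by a diagonal argument) and let $k\to\infty$ at the end; the two organizations are interchangeable.
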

\begin{proof} Indeed, for any $\epsilon >0$, let $k_0$ be such that  for any $k\geq k_0$,
$$\limsup_n \|u_n - u_{n,k}\|_{L^2((0,T) \times \T^2)} < \epsilon . $$
Then, if $\nu$ and $\nu^{(k)}$ are weak-$*$ limits of the sequences $(u_n)_{n\in \NN}$ and $(u_n^{(k)})_{n\in \NN}$ respectively, associated to the same subsequence $n_p\rightarrow + \infty$, we have for any  $f\in C^0((0,T)\times \T^2)$ and large~$n$, 
\begin{multline}\label{eq.1}
\int_{(0,T) \times \T^2} |u_{n_p}|^2 \chi dx dt \leq \int_{(0,T) \times \T^2} 2 (|u_{n_p}- u_{{n_p}}^{(k)}|^2+ |u_{{n_p}}^{(k)} |^2 ) dx dt\\
\leq 2 \epsilon^2 + 2 \int_{(0,T) \times \T^2} 2 |u_{{n_p}}^{(k)}| ^2 ) \chi dx dt.
\end{multline}
Passing to the limit $p\rightarrow + \infty $ we obtain
$$ \langle \nu, \chi\rangle \leq 2\epsilon ^2 + 2 \langle \nu^{(k)}, \chi \rangle $$
On the other hand, according to Riesz Theorem (see e.g.~\cite[Theorem 2.14]{Ru}) the measures~$\nu, \nu^{(k)}$ which are defined on the Borelian $\sigma$-algebra, $\mathcal{M}$, are {\em regular} and consequently 
\begin{equation}
\begin{gathered}
 \forall E\in \mathcal{M},
 \nu(E) = \sup_{F closed, F \subset E} \nu(U)= \inf_{U open, E \subset U} \nu(U),\\
  \forall E\in \mathcal{M},  \nu^{(k)}(E) = \sup_{F closed, F \subset E} \nu^{(k)}(U)= \inf_{U open, E \subset U} \nu^{(k)}(U).
  \end{gathered}\end{equation}
  For any $E \in \mathcal{M}$, taking $F_p \subset E $ and $E\subset O_p$ such that
  $$ \lim_{p\rightarrow + \infty} \nu(F_p) = \nu(E), \lim_{p\rightarrow + \infty} \nu^{(k)} (O_p ) = \nu^{(k)} (E)
  $$ and $\chi_p\in C_0( (0,1)\times \T^d; [0,1]) $ equal to $1$ on $F_p$ and supported in $O_p$, we obtain according to~\eqref{eq.1}
  $$ \nu(E) \leq 2 \epsilon ^2 + 2 \nu^{(k)}(E). $$
  Consider now  $E$ a subset of $(0,T) \times \T^d$-Lebesgue measure $0$. Since by assumption $\nu^{(k)}$ is absolutely continuous with respect to the Lebesgue measure, we have $\nu^{(k)}(E) =0$, and hence $\nu(E)\leq 2 \epsilon ^2$ and consequently,  since $\epsilon>0$ can be taken arbitrarily small, we have  $\nu(E)=0$, which proves that $\nu$ is also absolutely continuous with respect to the Lebesgue measure.
  \end{proof} 

We come back to the proof of Theorem~\ref{theorem.2} and fix $T>0$. According to Duhamel formula. 
$$ u_n = e^{it \Delta} u_{0,n} + \frac 1 i \int_0^t  e^{i(t-s)\Delta } f_n (s) ds.$$
According to the remark above, since we know that the sequence $(e^{it \Delta} u_{0,n})$ satisfies property ($AC_T$), it is enough to prove that the sequence $ (v_n)= (\int_0^t  e^{i(t-s)} f_n (s) ds)$ satisfies property ($AC_T$). The key point of the analysis is to remark that  if instead of $v_n$ we had 
$$\tilde{v}_n =\int_0^T  e^{i(t-s)\Delta} V u_n (s) ds= e^{it \Delta} g_n, \qquad g_n = \int_0^T  e^{-is\Delta} V e^{is (\Delta+V)} u_{n,0} (s) ds,
$$
then, we could conclude using Theorem~\ref{theorem.1} because $\tilde{v}_n$ is a solution to the homogeneous Schr\"odinger equation with initial data the bounded sequence $(g_n)$. To pass from $\tilde{v}_n$ to $v_n$, we adapt an idea borrowed from harmonic analysis (Christ-Kiselev' Lemma~\cite{CK}), in the simple form written in Burq-Planchon~\cite{BuPl} (see also~\cite{Bu}). Here the idea is to show that the sequence $(v_n)$ can be approximated by other sequences $(v_n^{(k)})$ in the sense of~\eqref{eq.approx} (actually, we get a stronger convergence, as we can replace the $\limsup$ in~\eqref{eq.approx} by a $\sup$), where each $(v_n^{(k)})$ is a finite sum of solutions of the homogeneous Schr\"odinger equation, properly truncated in time, and hence satisfy property~($AC_T$). Let  
$$ \|f_n \|_{L^1((0,T); L^2( \T^2))} = c_n \leq C.$$
We decompose the interval $(0,T)$ into dyadic pieces on which the $L^1((0,T); L^2( \T^d))$-norm of $f_n$ is equal to $2^{-q}c_n$. For this, we construct recursively (on the index $q\in \NN$) sequences $(t_{p,q,n})_{\genfrac{}{}{0pt}{}{q\in \N \hfill}{p=1,\dots, 2^{q} }}$ such that 
\begin{itemize}
\item $0= t_{0,q,n}< t_{1,q,n}< \cdots < t_{2^{q},q,n}=T$,
\item  $\|f_n \|_{L^1((t_{p,q,n},t_{p+1,q,n}); L^2( \T^2))}= 2^{-q} c_n $,
\item for any $p=0, \cdots 2^{q-1}$, $t_{2p,q, n}= t_{p,q-1,n}$.
\end{itemize}
Notice that if the function 
$$G_n: t\in [0,T] \mapsto \|f_n\|_{L^1((0,t); L^2( \T^d))} \in [0,c_n]$$ is strictly increasing, the points $t_{p,q,n}$ are uniquely determined by the relation $ G_n (t_{p,q,n}) = p2^{-q} c_n$, 
and the last condition above is automatic. In the general case,  the function $G_n$ (which is clearly non decreasing) can have some flat parts, consequently the points $t_{p,q,n}$ may be no more unique and the last condition above ensures that the choice made at step $q+1$ is consistent with the choice made at step $q$.
 For $j=0, \dots, 2^{q} -1$, let   
 $$I_{j,q,n}= [t_{2j,q,n} , t_{2j+1, q,n}[, \qquad J_{j,q,n}= [ t_{2j+1,q,n}, t_{2j+2,q,n}[, \qquad Q_{j,q,n}= J_{j,q,n} \times I_{j,q,n}.$$
 Notice that 
 $$ \{((t,s)\in [0,T[^2; s\leq t \} = \bigsqcup_{q=0}^{+ \infty} \bigsqcup_{j=0}^{2^{q} -1} Q_{j,q,n} \Rightarrow 1_{s\leq t} = \sum_{q=0}^{+ \infty} \sum_{j=0}^{2^{q} -1} 1_{Q_{j,q,n}}(t,s).$$
 \begin{figure}[ht]
\vskip - .5cm $$\ecriture{\includegraphics[width=5cm]{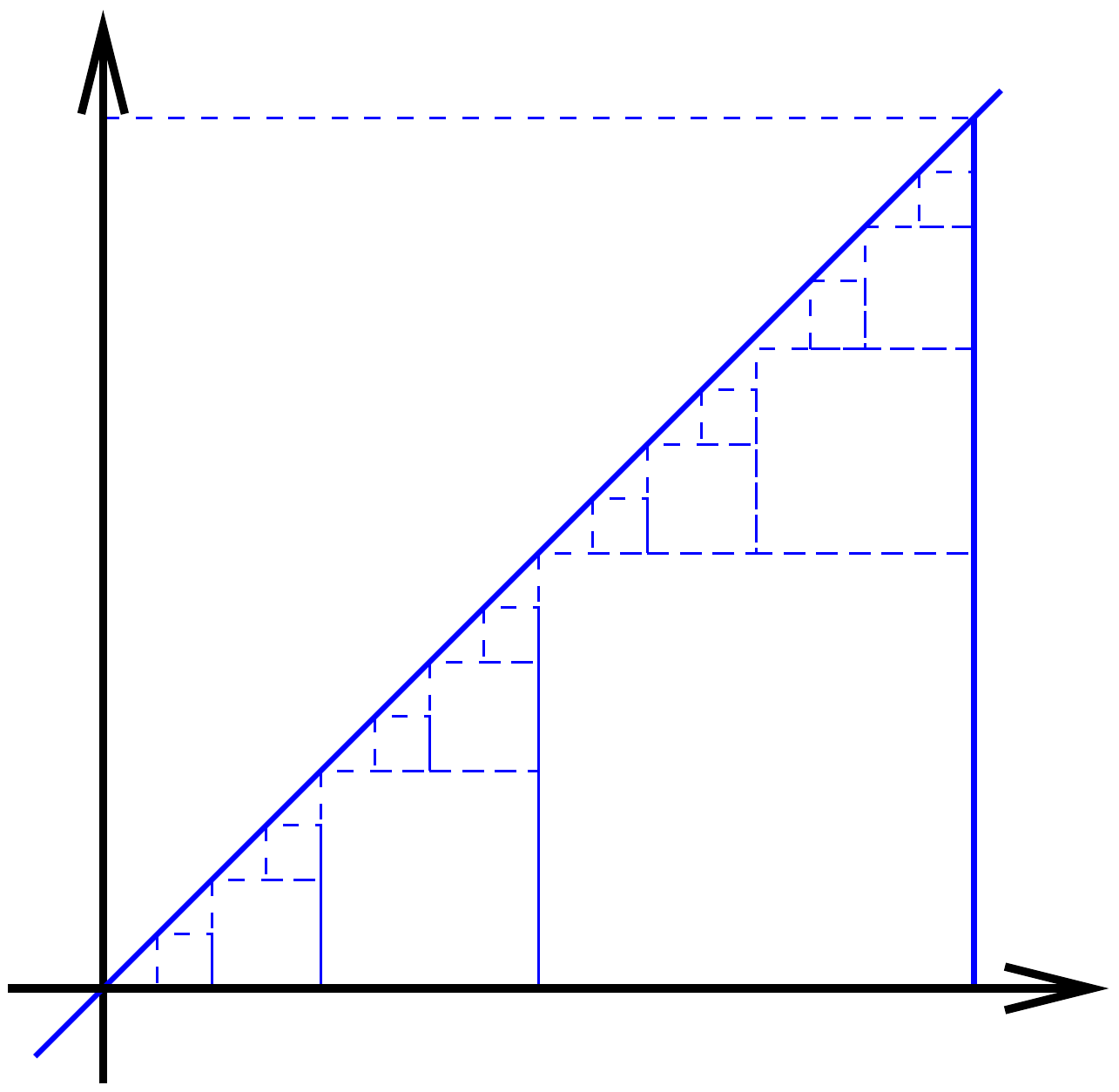}}{
\aat{42}{1}{$T$}\aat{1}{42}{$T$}\aat{32}{13}{${Q}_{0,0}$}\aat{17}{9}{${Q}_{0,1}$}\aat{37}{28}{${Q}_{1,1}$}\aat{10}{6}{${Q}_{0,2}$}\aat{20}{16}{${Q}_{1,2}$}\aat{29}{26}{${Q}_{2,2}$}\aat{39}{35}{${Q}_{3,2}$}
}$$
\vskip -.5cm\caption{Decomposition of a triangle as a union of disjoint squares}
\end{figure}
We now have (if we are able to prove that the series in $q$ converges)
\begin{multline}\label{eq.series}
 v_n = \int_0^t  e^{i(t-s) \Delta} f_n (s) ds = \int_0^T 1_{s\leq t} e^{i(t-s) \Delta} f_n (s) ds \\
= \sum_{q=0}^{+\infty} \sum_{j=0}^{2^{q} -1}  1_{t\in J_{j,q,n}} \int_0^T  e^{i(t-s) \Delta} 1_{s\in I_{j,q,n}}f_n (s) ds 
= \sum_{q=0}^{+\infty} \sum_{j=0}^{2^{q} -1}  1_{t\in J_{j,q,n}} e^{it \Delta} g_{j,q,n} ds,
\end{multline}
with 
\begin{equation}\label{eq.estim}
\begin{gathered}
g_{j,q,n} (x)= \int_0^T  e^{-is \Delta} 1_{s\in I_{j,q,n}}f_n (s)ds = \int_{t_{2j,q,n}} ^{t_{2j+1,q,n}}  e^{-is \Delta} f_n (s)ds, \\
\|g_{j,q,n}\|_{L^2( \T^d)}\leq  \|f_{n}\|_{L^1((t_{2j,q,n}, t_{2j+1, q,n}T); L^2( \T^d))}= 2^{-q} c_n.
\end{gathered}
\end{equation}
Let 
$$v_n^{(k)}= \sum_{q=0}^{k} \sum_{j=0}^{2^{q} -1}  1_{t\in J_{j,q,n}} e^{it \Delta} g_{j,q,n} ds.
$$
Noticing that if a sequence $(w_n)$ satisfies property($AC_T$), then for any sequences $0\leq t_{1,n} <t_{2,n} \leq T$, the sequence $(1_{t\in (t_{1,n}, t_{2,n})}w_n)$ satisfies property($AC_T$), we see that for any $k\in \NN$, the sequence $(v_n^{(k)})$ satisfies property ($AC_T$). On the other hand, since for $j\neq j'$, $1_{t\in J_{j,q,n}}$ and $1_{t\in J_{j',q,n}}$ have disjoint supports, we get, according to~\eqref{eq.estim},
\begin{multline}
\| \sum_{j=0}^{2^{q} -1}  1_{t\in J_{j,q,n}} e^{it \Delta} g_{j,q,n}\|_{L^\infty((0,T); L^2( \T^d))} \leq \sup_{0\leq j\leq 2^{q} -1}   \|1_{t\in J_{j,q,n}} e^{it \Delta} g_{j,q,n}\|_{L^\infty((0,T); L^2( \T^d))}\\
\leq \sup_{0\leq j\leq 2^{q} -1}  \|g_{j,q,n}\|_{L^2( \T^d))} \leq 2^{-q} c_n
\end{multline}
As a consequence, we get that the series~\eqref{eq.series} is convergent and 
$$ \| v_n- v_n^{(k)}\|_{L^2((0,T)\times \T^d)}  \leq \sqrt{T} c_n 2^{-k} \leq C 2^{-k},$$
which, according to Lemma~\ref{lemme.3.2}, concludes the proof of Theorem~\ref{theorem.2}.
\end{proof}
\section{An illustration}\label{sec.3}
We consider here the following non-linear Schr\"odinger equation
\begin{equation}\label{NLS}
(i \partial_t + \Delta)u  + V(u,t) u=0, \text{ on } \mathbb{T}^d, \qquad u \mid_{t=0} =0
\end{equation}
where the the function $z\in \C \mapsto V(z,t) z \in \mathbb{C}$ is globally Lipshitz with respect to the $z$ variable, with time integrable Lipschitz constant: 
$$ \exists C>0; \forall z, z' \in \mathbb{C},  |V(z,t) z -V(z',t) z'| \leq C(t) |z-z'|, C \in L^1_{\text{loc}} ( \mathbb{R}).$$
Notice that for example the choice $V(u,t) = \frac{ |u|^2} {1+ \epsilon |u|^2}$ satisfies these assumptions for any $\epsilon >0$.
\begin{prop}\label{propo.NLS}
For any $u_0 \in L^2( \T^d)$, there exists a unique solution $u \in C(\R; L^2( \T^d))$ to~\eqref{NLS}. Furthermore, there exists a continuous increasing function, $F(t)$ such that for any $u_0\in L^2(\T^d)$, the solution $u$ satisfies 
\begin{equation}
\label{eq.bound}
 \| u\|_{L^2( \T^d)} (t) \leq F(t) \|u_0 \|_{L^2( \T^d)}.
 \end{equation}
\end{prop}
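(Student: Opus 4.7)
The plan is to reformulate \eqref{NLS} as a fixed-point problem via Duhamel's formula and to run a classical Picard iteration using the unitarity of $e^{it\Delta}$ on $L^2(\T^d)$, combined with a standard $L^2$ energy identity to obtain the global bound \eqref{eq.bound}.

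I would first rewrite \eqref{NLS} in integral form as
$$u(t) = e^{it\Delta}u_0 + i\int_0^t e^{i(t-s)\Delta}\,V(u(s),s)\,u(s)\,ds =: \Phi(u)(t).$$
Applied with $z'=0$, the hypothesis gives the pointwise bound $|V(z,t)z|\le C(t)|z|$, and since $C(t)$ does not depend on $z$ the nonlinearity $u\mapsto V(u,\cdot)u$ acts on $L^2(\T^d)$ as a map that is Lipschitz with constant $C(t)$ (one just squares the scalar pointwise inequality and integrates). Combined with the isometry property of $e^{it\Delta}$, this yields
$$\|\Phi(u)(t)-\Phi(v)(t)\|_{L^2(\T^d)} \le \int_0^t C(s)\,\|u(s)-v(s)\|_{L^2(\T^d)}\,ds.$$
Endowing $C([0,T];L^2(\T^d))$ with the weighted norm $\|u\|_K := \sup_{t\in[0,T]} e^{-K\int_0^t C(s)\,ds}\|u(t)\|_{L^2(\T^d)}$ for $K$ large enough turns $\Phi$ into a strict contraction; Banach's theorem then provides a unique fixed point. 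Since the argument works for every $T>0$, one obtains a unique global solution $u\in C(\R;L^2(\T^d))$.

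For the bound \eqref{eq.bound}, I would replay the energy calculation used to derive Corollary~\ref{corol}. Differentiating $\|u(t)\|_{L^2(\T^d)}^2$ in $t$ and using self-adjointness of $\Delta$,
$$\tfrac{d}{dt}\|u(t)\|_{L^2(\T^d)}^2 = 2\Re(\partial_t u,u)_{L^2(\T^d)} = 2\Re(i\Delta u+iV(u,t)u,u)_{L^2(\T^d)} = -2\Im(V(u,t)u,u)_{L^2(\T^d)}.$$
The pointwise inequality $|V(u,t)u|\le C(t)|u|$ together with Cauchy--Schwarz gives $|(V(u,t)u,u)_{L^2(\T^d)}|\le C(t)\|u(t)\|_{L^2(\T^d)}^2$, and Gronwall yields $\|u(t)\|_{L^2(\T^d)} \le \|u_0\|_{L^2(\T^d)}\,e^{\int_0^t C(s)\,ds}$, so $F(t) := \exp\bigl(\int_0^t C(s)\,ds\bigr)$ is a continuous increasing function satisfying \eqref{eq.bound}.

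The only mildly delicate step, which I would identify as the main obstacle, is the transfer of the scalar Lipschitz hypothesis on $z\mapsto V(z,t)z$ into a Lipschitz bound at the $L^2(\T^d)$ level with the same constant $C(t)$; once this is observed (and it is immediate because $C(t)$ is independent of $z$, so the pointwise estimate on $\T^d$ can be squared and integrated), the rest is textbook Picard iteration in a functional setting where the underlying linear flow is an $L^2$ isometry, with a one-line Gronwall argument closing the a priori bound.
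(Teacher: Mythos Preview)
Your argument is correct and follows a slightly different, equally standard, route from the paper.

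For existence, the paper runs the contraction on a short time interval chosen so that $\int_0^T C(s)\,ds\le \tfrac12$, and then patches together finitely many such intervals to reach any $\widetilde T$; the a priori bound comes out of the iteration itself, yielding $F(t)=2^{\,1+2\int_0^t C(s)\,ds}$. You instead use a Bielecki-type weighted sup norm to make $\Phi$ a contraction on all of $C([0,T];L^2)$ in one stroke, and then obtain the growth bound by the formal energy computation (the same one the paper uses to justify Corollary~\ref{corol}) plus Gronwall, which gives the sharper $F(t)=\exp\bigl(\int_0^t C(s)\,ds\bigr)$. Your approach avoids the interval-decomposition bookkeeping and gives a better constant; the paper's approach has the small advantage that the bound \eqref{eq.bound} is obtained purely from the fixed-point iteration, so no separate (and, at the $L^2$ level, formally unjustified) differentiation of $\|u(t)\|_{L^2}^2$ is needed. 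Since the paper itself uses that formal differentiation elsewhere, your level of rigor matches the text.
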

\begin{cor} For any sequence of initial data $(u_{0,n})$ bounded in $L^2( \T^d)$, the sequence  $(u_n)$ of solutions to~\eqref{NLS} satisfies  
$$ \| V(u_n,t) u_n\|_{L^2( \T^d)} \leq C(t) \| u_n\| _{L^{\infty}((0,t); L^2( \T^d))}  \leq C(t)f(t) \|u_{0,n}\|_{L^2( \T^d)}  \in L^1_{\text{ loc}} (\R_t),$$
and consequently, the sequence $(u_n)$ satisfies property $(AC_T)$ for any $T>0$. 
\end{cor}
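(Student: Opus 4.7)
The plan is to view each $u_n$ as a solution of the inhomogeneous linear equation $(i\partial_t + \Delta) u_n = f_n$ with $f_n = -V(u_n,t)\, u_n$ and initial datum $u_{0,n}$, so that Theorem~\ref{theorem.2} can be applied as a black box. Its two hypotheses are that $(u_{0,n})$ is bounded in $L^2(\T^d)$, which holds by assumption, and that $(f_n)$ is bounded in $L^1_{\text{loc}}(\RR_t; L^2(\T^d))$, which is exactly what the displayed chain of inequalities in the Corollary provides. So the whole task reduces to verifying that chain.

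To justify it, I would first extract a pointwise bound on the nonlinearity from the Lipschitz hypothesis. Taking $z' = 0$ in the inequality $|V(z,t)\, z - V(z',t)\, z'| \leq C(t)\, |z - z'|$ yields $|V(z,t)\, z| \leq C(t)\, |z|$ for every $z \in \CC$. Applying this with $z = u_n(t,x)$ and integrating over $\T^d$ gives the first estimate in the Corollary,
\[
\|V(u_n,t)\, u_n\|_{L^2(\T^d)} \leq C(t)\, \|u_n(t)\|_{L^2(\T^d)}.
\]

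Next, I would invoke the a priori control \eqref{eq.bound} from Proposition~\ref{propo.NLS}, which gives $\|u_n(t)\|_{L^2(\T^d)} \leq F(t)\, \|u_{0,n}\|_{L^2(\T^d)}$ uniformly in $n$. Since $F$ is continuous (hence locally bounded on $\RR_+$) and $(u_{0,n})$ is bounded in $L^2(\T^d)$, combining the two bounds produces
\[
\|f_n(t)\|_{L^2(\T^d)} \leq C(t)\, F(t)\, \sup_n \|u_{0,n}\|_{L^2(\T^d)},
\]
and the right-hand side lies in $L^1_{\text{loc}}(\RR_t)$ because $C \in L^1_{\text{loc}}(\RR)$ by the Lipschitz hypothesis and $F$ is continuous. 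This establishes that $(f_n)$ is bounded in $L^1_{\text{loc}}(\RR_t; L^2(\T^d))$.

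With both hypotheses of Theorem~\ref{theorem.2} now verified, property $(AC_T)$ for every $T > 0$ follows directly. There is no real obstacle in this corollary: Proposition~\ref{propo.NLS} has already done all the PDE work by providing global existence and the $L^2$ growth estimate, the Lipschitz assumption translates immediately into an $L^2$-bound on the forcing, and Theorem~\ref{theorem.2} absorbs the semiclassical content; this is precisely why the author presents the result as an illustration of the abstract framework developed in Section~2.
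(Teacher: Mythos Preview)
Your argument is correct and is exactly the approach the paper intends: the Corollary has no separate proof in the paper because the displayed chain of inequalities together with an application of Theorem~\ref{theorem.2} \emph{is} the proof, and you have spelled out precisely those steps (Lipschitz bound at $z'=0$, the a priori estimate~\eqref{eq.bound}, and the resulting $L^1_{\text{loc}}$ control on the forcing). Nothing is missing.
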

\begin{proof} [Proof of Proposition~\ref{propo.NLS}] Let 
$$ K: u \in L^\infty((0,T); L^2( \T^d)) \mapsto e^{it\Delta } u_0 + \frac 1 i \int _0 ^t e^{i(t-s)} \bigl(V(u(s),s) u(s)\bigr) ds.
$$ 
We have
\begin{equation}
\begin{aligned}
 \| K(u) - e^{it\Delta } u_0\|_{ L^\infty((0,T); L^2( \T^d))} &\leq \int _0^T C(s) ds \| u\|_{L^\infty((0,T); L^2( \T^d))}\\
 \| K(u) - K(v)\|_{ L^\infty((0,T); L^2( \T^d))} &\leq \int _0^T C(s) ds \|u-v\|_{L^\infty((0,T); L^2( \T^d))}
 \end{aligned}
 \end{equation}
 We obtain that the map $K$ has a unique fixed point on the ball centered on $e^{it \Delta} u_0$ with radius $\|u_0\|_{L^2( \T^d)}$ in $ L^\infty((0,T); L^2( \T^d))$, as soon as $ \int _0 ^T C(s) ds \leq \frac 1 2$.
 This proves the local existence claim. To obtain existence on any time interval $[0,\widetilde{T}]$, we  write $ [0,\widetilde{T}] = \cup_{j=1}^N [t_j, t_{j+1}]$,
 where we choose $t_j$ recursively such that $ \int_{t_j} ^{t_{j+1}} C(s) ds \leq \frac 1 2$.
 Remark that taking $\int_{t_j} ^{t_{j+1}} C(s) ds = \frac 1 2, \forall j< N-1$ gives the bound 
 \begin{equation}\label{borne}
  N \leq  1+ 2 \int_0^{\widetilde{T}} C(s) ds.
  \end{equation}
 Then applying the first step recursively gives a solution on $[0,\widetilde{T}]$ which satisfies according to~\eqref{borne}
 $$ \|u\|_{L^2( \T^d)} (\widetilde{T}) \leq 2^{N} \|u_0\|_{L^2(\T^d)}\leq 2^{ 1+ 2 \int_0^{t} C(s) ds} \|u_0\|_{L^2( \T^d)}.$$ The uniqueness claim in Proposition~\ref{propo.NLS} follows now from standard methods.
 \end{proof} 
\end{document}